\numberwithin{equation}{section} \theoremstyle{plain}
\newtheorem{theorem}[subsection]{Theorem}
\newtheorem{proposition}[subsection]{Proposition}
\newtheorem{lemma}[subsection]{Lemma}
\newtheorem{definition}[subsection]{Definition}
\renewcommand{\leq}{\leqslant}
\renewcommand{\geq}{\geqslant}
\newsavebox{\proofbox}
\savebox{\proofbox}{\begin{picture}(7,7)  \put(0,0){\framebox(7,7){}}\end{picture}}
\newcommand\Z{\mathbb{Z}}
\newcommand\R{\mathbb{R}}
\newcommand\C{\mathbb{C}}
\newcommand\N{\mathbb{N}}
\newcommand\car{\operatorname{char}}
\newcommand\GL{\operatorname{GL}}
\newcommand\SO{\operatorname{SO}}
\newcommand\F{\mathbb{F}}
\newcommand\U{\mathbb{U}}
\begin{document}

\title[Jordan's original proof]{An exposition of Jordan's original proof of his theorem on finite subgroups of $\GL_n(\C)$.}

\author{
Emmanuel Breuillard}
\address{Emmanuel Breuillard\\
Mathematical Institute\\
Woodstock Road\\
OX2 6GG\\
United Kingdom}
\email{breuillard@maths.ox.ac.uk}

\begin{abstract}
We discuss Jordan's theorem on finite subgroups of invertible matrices and give an account of his original proof.
\end{abstract}

\dedicatory{Dedicated to Udi Hrushovski on the occasion of his 60th birthday.}

\maketitle

\section{introduction}

In 1878 Camille Jordan \cite{jordan} proved the following theorem:

\begin{theorem}[Jordan's theorem] \label{main1}
Let $G$ be a finite subgroup of $\GL_n(\C)$, then there is a normal abelian subgroup $A$ in $G$ of index bounded by a constant $J(n)$ depending on $n$ only.
\end{theorem}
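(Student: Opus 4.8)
The plan is to reduce to the unitary group and then run the ``commutators contract near the identity'' argument, the content being entirely in a Zassenhaus-type lemma about the subgroup generated by near-identity elements.

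\smallskip
\noindent\emph{Step 1: reduction to $U(n)$.} Averaging the standard Hermitian form over $G$ --- concretely, $Q:=\sum_{g\in G}g^{*}g$ is positive definite and $G$-invariant ($g^{*}Qg=Q$ for $g\in G$), so writing $Q=P^{*}P$ one gets $PGP^{-1}\subseteq U(n)$ --- shows it suffices to treat $G\subseteq U(n)$, since conjugation carries a normal abelian subgroup of given index to one of the same index. Equip $M_n(\C)$ with the operator norm $\|\cdot\|$, which is unitarily invariant; hence for each $\epsilon>0$ the ball $B_{\epsilon}:=\{g\in U(n):\|g-I\|<\epsilon\}$ is stable under conjugation by $U(n)$.

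\smallskip
\noindent\emph{Step 2: the key lemma.} I would prove there is $\epsilon=\epsilon(n)>0$ --- one may take $\epsilon(n)=\min\{1/2,\,2\sin(\pi/n)\}$, which is nonincreasing in $n$ --- such that for every finite $G\subseteq U(n)$ the subgroup $H:=\langle G\cap B_{\epsilon}\rangle$ is abelian. The engine is the elementary estimate $\|[a,b]-I\|=\|ab-ba\|\le 2\,\|a-I\|\,\|b-I\|$ for $a,b\in U(n)$: a commutator of two elements within $\epsilon\le1/2$ of $I$ is strictly closer to $I$ than each factor. Proceed by induction on $n$ (the case $n=1$ being trivial). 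Given $g,h\in G\cap B_{\epsilon}$ with $[g,h]\ne I$, put $\Gamma:=\langle g,h\rangle$ and let $a\in\Gamma\setminus\{I\}$ be at minimal distance from $I$; then $a\in B_{\epsilon}$, and the estimate together with minimality of $\|a-I\|$ forces $[g,a]=[h,a]=I$, i.e. $a\in Z(\Gamma)$. If $a$ is not scalar, its eigenspaces are proper $\Gamma$-invariant subspaces of $\C^{n}$, and restricting $g,h$ to each of them and applying the inductive hypothesis (the restrictions still lie in the relevant ball, as $\epsilon(m)\ge\epsilon(n)$ for $m\le n$) forces $[g,h]=I$, a contradiction. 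If $a$ is scalar, instead let $a'\in\Gamma$ be a \emph{non-scalar} element at minimal distance from $I$ --- one exists, as $g,h$ cannot both be scalar --- and examine $c:=[b,a']$ for $b\in\{g,h\}$: since $\|c-I\|<\|a'-I\|$, minimality forces $c$ to be $I$ or a \emph{scalar} $\mu I\ne I$. In the latter case $\langle b,a'\rangle$ is non-abelian and nilpotent of class $2$, with derived subgroup the central $\langle\mu I\rangle$; it acts irreducibly on $\C^{n}$ (on a proper invariant subspace $b$ and $a'$ would commute by induction, giving $\mu=1$), and since $b(a'v)=\mu\lambda(a'v)$ whenever $bv=\lambda v$, the distinct eigenvalues of $b$ form one orbit of size $\operatorname{ord}(\mu)$ under multiplication by $\mu$; hence $\operatorname{ord}(\mu)\le n$ and $\|\mu I-I\|=|\mu-1|\ge 2\sin(\pi/n)\ge\epsilon$, contradicting $\|c-I\|<\|a'-I\|<\epsilon$. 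So $c=I$ for both $b$; thus the non-scalar $a'$ is central in $\Gamma$, $\Gamma$ is reducible, and $[g,h]=I$ as before --- contradiction. Hence any two elements of $G\cap B_{\epsilon}$ commute and $H$ is abelian.

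\smallskip
\noindent\emph{Step 3: conclusion, and the main obstacle.} Set $H:=\langle G\cap B_{\epsilon(n)}\rangle$. By Step 2 it is abelian, and $H\triangleleft G$ because $B_{\epsilon}$, hence $G\cap B_{\epsilon}$, is stable under conjugation by $G$. Compactness of $U(n)$ gives a bound $C(n)$ on the size of any subset of $U(n)$ that is pairwise $\epsilon(n)$-separated, and distinct cosets of $H$ in $G$ have representatives that are $\epsilon(n)$-separated: if $\|g_{i}-g_{j}\|<\epsilon$ then $g_{i}g_{j}^{-1}\in G\cap B_{\epsilon}\subseteq H$, so $g_{i}H=g_{j}H$ by normality. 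Thus $[G:H]\le C(n)$, and one takes $A=H$ and $J(n)=C(n)$. Steps 1 and 3 are routine; everything is in the lemma of Step 2, and within it the delicate point is the scalar (Heisenberg) case --- bounding the order of a central scalar commutator $\mu I$ by the dimension $n$, which is in effect the fact that a faithful irreducible representation of a Heisenberg-type group of order $k^{3}$ has dimension $k$. I would expect Jordan's 1878 argument, predating this Lie-theoretic packaging, to dispense with the neighbourhood $B_{\epsilon}$ altogether and to bound $[G:A]$ by direct and explicit estimates on matrix entries.
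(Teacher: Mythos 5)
Your argument is correct, but it is not the argument of this paper: you have reconstructed the Bieberbach--Frobenius proof (the ``third proof'' the introduction explicitly contrasts with Jordan's), whereas the paper is an exposition of Jordan's original 1878 argument, which is entirely different. Your route --- Weyl's unitary trick to place $G$ in $U(n)$, a Zassenhaus/Frobenius-type commutator-shrinking lemma showing that $\langle G\cap B_{\epsilon(n)}\rangle$ is abelian and normal, and a packing bound on its index --- checks out, including the delicate Heisenberg sub-case where the central scalar commutator $\mu I$ is forced to have order at most $n$ via the $\mu$-orbit structure on the spectrum of $b$; the only cosmetic slip is that your formula gives $\epsilon(1)=0$, harmless since $U(1)$ is abelian and the base case needs no ball. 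Jordan's proof, by contrast, is purely algebraic and arithmetic: it inducts on $n$ using the notion of an $M$-fan (a diagonal subgroup each of whose nontrivial roots takes at least $M$ values), attaches to each non-scalar $g\in G$ the unique maximal fan of its centralizer (supplied by the induction hypothesis), and enumerates $G$ accordingly to obtain a class equation of the shape $\frac{1}{g}=\frac{1}{q_1}+\cdots+\frac{1}{q_k}-\frac{b}{a}$ with $k,a,b$ bounded in terms of $n$; an elementary diophantine lemma then bounds $g=[G:\Phi]$ unless some $q_i=g$, in which case a fan of bounded index is already normal in $G$. What your approach buys is brevity and a clean explicit bound (of the quality $(\sqrt{8n}+1)^{2n^2}$ after Frobenius), at the price of being tied to characteristic zero and the compact group $U(n)$. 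What Jordan's approach buys is that it works verbatim over any field once the elements of $G$ are assumed diagonalizable, and it yields finer structural information --- Jordan used it to enumerate the finite subgroups of $\GL_3(\C)$ --- though the bounds it produces are towers of exponentials. Your closing guess that Jordan avoids the neighbourhood $B_\epsilon$ ``by direct and explicit estimates on matrix entries'' is off the mark: he avoids it by counting conjugacy classes of fans, not by estimating anything metric.
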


It is the purpose of this note to provide an account of Jordan's original proof of his result. Jordan's proof is purely algebraic, and  quite different from the proofs found in most textbooks (such as \cite{curtis-reiner} or \cite{dixon}) that are based a geometric argument due to Bierberbach \cite{bieberbach}. Jordan's proof does not appear to have been discussed much elsewhere (with the exception of Dieudonn\'e's notes in Jordan's collected works \cite{dieudonne}) even as this year marks the hundredth anniversary of Jordan's death.

Jordan's motivation for proving this result came from the study of linear differential equations of order $n$ with rational functions as coefficients and with algebraic solutions: in this context finite subgroups of $\GL_n$ arise naturally as monodromy groups and information such as Theorem \ref{main1} on the monodromy group translates immediately into structural properties for the solutions of the equation\footnote{For the full story of the motivations and context in which Jordan's theorem was proven, we refer the reader to the wonderful book by Jeremy Gray \cite{gray}}. Prior to Jordan, Fuchs and Klein had studied the two dimensional case and Klein had given a complete list of finite subgroups of $\GL_2(\C)$. Jordan announced his result in \cite{jordan-announce}, published it in \cite{jordan} and later wrote a second article \cite{jordan-napoli} to clarify his proof. 


Jordan argued by induction on the dimension, but he gave no explicit bound on $J(n)$ in his article, not even an inductive one. 
It is therefore understandable that mathematicians sought to find explicit bounds closer to the truth and this topic has been quite active in the last 146 years. Indeed after Jordan's memoir, several authors gave new proofs of his theorem. The first of these appears to be Blichfeldt, who gave an entirely different proof of Jordan's result via the study of the $p$-Sylow subgroups, for which he established explicit bounds on their size in terms of $p$ and $n$ (see \cite{blichfeldt, miller-blichfeldt-dickson}). Subsequently Bieberbach \cite{bieberbach} came up with yet another very different and  purely geometric argument, which was later refined by Frobenius \cite{frobenius}. This third proof is much slicker and it is the one that can be found, with some variants, in most textbooks that treat the question
(such as \cite[chap. V]{curtis-reiner}, \cite[chap. 8]{raghunathan}). Blichfeldt himself later combined it with his previous approach to improve his bounds on $J(n)$ (see \cite{miller-blichfeldt-dickson} and \cite{speiser,isaacs, dornhoff}).

Bierberbach's argument starts with what people refer nowadays as Weyl's unitary trick (i.e. the observation that a finite, or compact, subgroup $G$ of $\GL_n(\C)$ can be conjugated inside the compact unitary group $\U_n(\C)$ by averaging a hermitian product over $G$). Then one makes use of a volume packing argument in combination with the \emph{commutator shrinking property} of Lie groups, i.e. the fact that commutators of elements close to the identity in $\U_n(\C)$ are themselves close, and in fact much closer, to the identity. This commutator shrinking property has inspired several other authors (\cite{zassenhauss}, \cite{boothby-wang}, \cite{kazhdan-margulis}, \cite{bass}) and is nowadays a crucial tool in the study of discrete subgroups of Lie groups and in Riemannian geometry. We refer the reader to \cite[Thm A]{robinson} or \cite[\S 2]{breuillard-green} for a proof of Jordan's theorem via this argument.

Jordan's original proof on the other hand was based on a purely algebraic idea, that should be traced back to Klein's method for the classification of the finite subgroups of rotations of the $2$-sphere (and isometries of Plato's solids), as described in Klein's famous book on the icosahedron  \cite{klein}. Basically, one enumerates the elements of $G$ according to the shape and size of their centralizers and one can thus write a class equation involving the order of $G$ and of the centralizers of its elements. Inducting on dimension this yields a diophantine equation of the form:
\begin{equation}
  \frac{1}{g}= \frac{1}{q_1}+...+\frac{1}{q_{k}} -\frac{b}{a},
\end{equation}
where $g=[G:\Phi]$ is the index of the center $\Phi$ of $G$ in $G$, $a,b$ and $k$ are integers that are bounded in terms of $n$ only and each $q_i$ is the cardinality of a certain subgroup of $G/\Phi$. It is easy to check that any equation of this form forces $g$ to be bounded (in terms of $n$) and Jordan then discusses the boundedly many cases that may arise. Although more cumbersome, this method gives potentially much more information on the finite subgroup $G$. For example, Jordan used it to list all finite subgroups of $\GL_3(\C)$, giving an explicit set of generators for each class of groups, after examining some $47$ different cases\footnote{In fact Jordan missed some groups, see \cite{blichfeldt, dieudonne}.}.

With the advent of the classification of finite simple groups, B. Weisfeiler \cite{weisfeiler} and more recently M. Collins \cite{collins} have found tight bounds for $J(n)$. For example, Collins proved that if $n \geq 71$, then $J(n)$ can be taken to be $(n+1)!$. This is tight, i.e. $(n+1)!$ is always a lower bound for $J(n)$, because the symmetric group on $n+1$ letters acts irreducibly on the hyperplane $\sum_{i=1}^{n+1} x_i=0$ by permuting the $n+1$ coordinates.

Schur \cite{schur1911} extended Jordan's theorem, proving that it holds assuming only that the group $G$ is torsion (i.e. every element has finite order). In particular every finitely generated torsion subgroup  of $\GL_n(\C)$ is finite. This is sometimes called the Jordan-Schur theorem, see \cite{wehrfritz, curtis-reiner}.

In another direction initiated by Brauer and Feit \cite{brauer-feit},  Larsen and Pink \cite{larsen-pink} gave a vast generalization of Jordan's theorem to finite linear groups in characteristic $p$, which avoids the classification of finite simple groups. Interestingly enough, part of their proof is very much akin to Jordan's original argument. See also \cite{borovik} for a recent use of theorems of Jordan and Larsen-Pink type in the study of finite group actions on elementary abelian $p$-groups with finite Morley rank. 

Finally, we mention that there are non-linear analogues of Jordan's theorem for finite subgroups of homeomorphisms of manifolds (conjectured by E. Ghys) and for finite subgroups of birational automorphisms of algebraic varieties. In these cases, it has recently been shown that there is a nilpotent subgroup of index bounded only in terms of the dimension of the manifold or variety. But ``nilpotent" cannot be replaced by ``abelian''.  We refer the reader to the preprints \cite{csikos-pyber-szabo} and \cite{guld} and references therein for these exciting recent developments. 

\smallskip

This paper is organized as follows: in Section \ref{sec2} we define the notion of $M$-fan and state the version of Theorem \ref{main1} that will be used as induction hypothesis. In Sections \ref{sec3} and \ref{sec4} we write the corresponding class equation and complete Jordan's proof of Theorem \ref{main1}. In Section \ref{sec5} we give an illustration of the method by specializing to the case of $n=2$ and we derive the classical results of Klein for finite subgroups of $\SO_3(\R)$. In Section \ref{sec6} we discuss a non-standard treatment of Jordan's proof, which is very close to Jordan's original formulation of his proof, and in the last section we briefly survey bounds for $J(n)$ from a historical perspective.

\vspace{.5cm}
\noindent \emph{Acknowledgements.} I am grateful to Udi Hrushovski, who first drew my attention to Jordan's original memoir, to Michael Collins and Terry Tao for useful comments and to Jean-Pierre Serre for encouraging me to publish this note, a first version of which was written some ten years ago. I also thank the referee for their thoughtful comments and careful reading.

\bigskip

\section{A reformulation of Theorem \ref{main1}}\label{sec2}

As we will see below, Jordan's argument uses nothing about the field $\C$ and in fact his proof carries over to an arbitrary field provided we assume that every element of $G$ is semisimple, i.e., diagonalizable in some field extension. So we let $K$ be an arbitrary field, which we assume algebraically closed without loss of generality.

Let us first reformulate Theorem \ref{main1} in the form orginally proved by Jordan. For this we need to introduce a couple of definitions.





\begin{definition} By a  \emph{root torus}, we mean a subgroup of $\GL_n(K)$ which is conjugate to a subgroup of the diagonal matrices defined by a set of equalities between the diagonal entries.
\end{definition}

For example, the subgroup of diagonal matrices $\{g=diag(a_1,...,a_6) | a_i\in K^*, a_1=a_2, a_5=a_6\}$ is a root torus of $\GL_6(K)$.

\begin{definition} Let $G$ be a finite subgroup of $\GL_n(K)$. Given $M \geq 2$, we say that a subgroup $F$ of $G$ is an \emph{$M$-fan} if it is conjugate to a subgroup of the diagonal matrices $\{g=diag(a_1,...,a_n), a_i \in K^*\}$ such that for every pair of indices $i,j$ the set of ratios $a_i(g)/a_j(g)$ is either reduced to $\{1\}$ or achieves at least $M$ distinct values as $g$ varies in $F$.
\end{definition}

The terminology \emph{fan} is a liberal translation of Jordan's \emph{faisceau}\footnote{We are grateful to the referee for suggesting this translation. In fact the word \emph{faisceau} is used throughout Jordan's other works to mean sometimes `subgroup', sometimes `abelian subgroup'.}. Note that the subgroup $\Phi$ of all scalar matrices in $G$ is clearly an $M$-fan, for any $M\ge 2$.

Note that every $M$-fan $F$ is contained in a unique minimal root torus $S_F$ defined by the same equalities between diagonal elements, such as $a_i=a_j$, as those that hold in $F$. In particular $G\cap S_F$ is itself an $M$-fan and every maximal $M$-fan in $G$ has this form.

\smallskip

We can now formulate an alternative, slightly more precise, version of Theorem \ref{main1} above:

\begin{theorem}[Jordan's theorem, second form]\label{main2} Given $n \in \N$, there are constants $M=M(n),N=N(n) \geq 1$ such that the following holds.
Let $K$ be an algebraically closed field, and let $G$ be a finite subgroup of $\GL_n(K)$, such that every element of $G$ is diagonalizable. Then $G$ contains a unique maximal $M$-fan. Call it $\mathcal{F}$. We have $[G:\mathcal{F}] \leq N$.
\end{theorem}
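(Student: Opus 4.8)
The plan is to argue by induction on $n$, following Klein's enumeration-by-centralizers idea. Fix $G \le \GL_n(K)$ with every element semisimple, and let $\Phi$ be the subgroup of scalar matrices, $g = [G:\Phi]$. The base case $n=1$ is trivial since $\GL_1$ is abelian. For the inductive step, first I would establish existence and uniqueness of a maximal $M$-fan: given two maximal $M$-fans $F_1, F_2$, one wants to show they coincide, and this should reduce (after the remark in the excerpt, replacing each $F_i$ by $G \cap S_{F_i}$) to showing that the group generated by two commuting root tori that are $M$-fans is again contained in a root torus that is an $M$-fan, provided $M$ is large enough relative to $n$; the obstruction to this is controlled because an $M$-fan forces genuinely many distinct eigenvalue ratios, so ratios cannot accidentally collapse. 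The uniqueness then lets us call $\mathcal F$ the maximal $M$-fan and reduces the theorem to bounding $[G:\mathcal F]$.

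The heart of the argument is the class equation. I would let $G$ act by conjugation on itself, and stratify the non-central elements of $G$ according to the ``type'' of their centralizer $C_G(x)$: for a semisimple $x$, its centralizer is itself (essentially) a linear group of block type attached to the partition of eigenvalues of $x$, hence of dimension strictly smaller in the relevant sense, so the induction hypothesis applies to $C_G(x)$ (or to a quotient of it). Counting the conjugacy classes of each type and summing up the sizes $|G|/|C_G(x)|$, together with the count $|\Phi|$ of central elements, gives
\begin{equation}
|G| = |\Phi| + \sum_{\text{types } t} \sum_{\text{classes of type } t} \frac{|G|}{|C_G(x)|}.
\end{equation}
Dividing through by $|G|$ and using that, by induction, each $C_G(x)$ contains an $M$-fan of bounded index, each inner sum becomes $n_t/q_t$ for an integer $q_t$ equal to the cardinality of (the image in $G/\Phi$ of) an $M$-fan inside $C_G(x)$, with the number of types and the numerators $n_t$ bounded in terms of $n$ only. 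Rearranging yields Jordan's diophantine identity
\begin{equation}
\frac{1}{g} = \frac{1}{q_1} + \cdots + \frac{1}{q_k} - \frac{b}{a},
\end{equation}
with $a,b,k$ bounded in terms of $n$. I would then invoke the elementary fact, stated in the introduction, that such an equation forces $g$ to be bounded: if all $q_i$ were large the right-hand side would be too close to $-b/a$ (hence, since it equals the positive quantity $1/g$, one pins down $b/a$ and then the $q_i$ from finitely many Egyptian-fraction possibilities), so some $q_i$ is bounded, and one peels off terms inductively. Boundedness of $g = [G:\Phi]$ and the inclusion $\Phi \subseteq \mathcal F$ with $[\mathcal F : \Phi]$ automatically bounded (an $M$-fan in $\GL_n$ has rank $\le n$, but more simply $\mathcal F/\Phi$ embeds in a bounded piece of $G/\Phi$) give $[G:\mathcal F] \le N(n)$.

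The main obstacle, I expect, is the bookkeeping in setting up the class equation correctly: one must verify that the centralizer of a non-central semisimple element genuinely falls under the induction hypothesis (this is where semisimplicity and algebraic closedness are used — the centralizer is a product of smaller $\GL$'s, and its $M$-fans relate back to $M$-fans of $G$), and one must track how an $M$-fan of $C_G(x)$ sits inside a root torus of $G$ so that the $q_i$ really are cardinalities of subgroups of $G/\Phi$ as claimed. A secondary subtlety is choosing $M(n)$ large enough — simultaneously for the uniqueness of the maximal fan and so that, at each inductive level, the fans produced are still $M$-fans for the $M$ appropriate to dimension $n$ (so in practice one takes $M(n)$ growing suitably with $n$); making these dependencies consistent across the induction is the part that needs care rather than cleverness.
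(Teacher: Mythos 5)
Your overall strategy --- induction on $n$, a class equation stratified by the ``type'' of centralizer, reduction to the diophantine identity $\frac1g=\sum\frac1{q_i}-\frac ba$ --- is the paper's (and Jordan's) approach. But your endgame has a genuine gap. You conclude that the diophantine equation forces $g=[G:\Phi]$ to be bounded, and you assert that $[\mathcal F:\Phi]$ is ``automatically bounded''. Both claims are false, for the same reason: a maximal $M$-fan is a finite subgroup of a torus and its order is not controlled by its rank. Take $G\le\GL_2(\C)$ generated by $\mathrm{diag}(\zeta,\zeta^{-1})$ ($\zeta$ a primitive $m$-th root of unity) and the coordinate swap: here $\Phi=\{\pm I\}$, $g=m$ is unbounded, and $\mathcal F$ is the (unbounded) diagonal part. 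Concretely, the equation $\frac1g=\frac12+\frac1{q_2}-\frac12$ with $q_2=g$ holds for every $g$, so the diophantine lemma only bounds $g$ under the extra hypothesis $q_i<g$ for all $i$. The missing case is precisely $q_i=g$ for some $i$ (in the paper's normalization $q_i=[N_G(\mathcal F_i):\Phi]$): then $G$ normalizes the maximal fan $\mathcal F_i$, and one concludes not that $g$ is bounded but that $[G:\mathcal F_i]\le n!\,N(n-1)$, using the induction hypothesis for $[C_G(\mathcal F_i):\mathcal F_i]$ and the bound $n!$ for the permutation action on weight spaces. This dichotomy --- either $g$ is bounded outright, or some maximal fan is normal of bounded index --- is the crux of the theorem (it is why the conclusion concerns $[G:\mathcal F]$ rather than $[G:\Phi]$) and is absent from your write-up.

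Two further points. First, your inner sums ``$n_t/q_t$ with $n_t$ bounded'' do not correctly account for the elements associated with a maximal fan $\mathcal F$: their number is $|C_G(\mathcal F)|-|\Phi|$, so that conjugacy class of fans contributes $|G|\bigl(\frac1\omega-\frac1q\bigr)$ with $\omega$ bounded but $q$ unbounded --- a \emph{difference} of two fractions, not a single fraction with bounded numerator. It is exactly this subtraction that produces the $\frac1{q_i}$ terms after rearrangement, and making it work requires grouping elements into cosets of their associated fan (so that the number of cosets, and hence of contributing conjugacy classes of fans, is bounded by a term-by-term lower bound on the contributions), rather than into conjugacy classes of elements. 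Second, your uniqueness argument for the maximal $M$-fan (via ``two commuting root tori'') is not justified as stated, since two maximal $M$-fans need not commute a priori; the paper only proves directly that every non-scalar $M$-fan lies in a unique maximal one (by passing through the centralizer of a non-scalar element of it), and obtains global uniqueness as a corollary once the normal abelian subgroup of bounded index has been produced.
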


The proof of Theorem \ref{main2} will span the next two sections. Before we start, a number of simple remarks are in order:

\smallskip

1) Since $\mathcal{F}$ is unique, it must be normal in $G$.

\smallskip

2) To see that Theorem \ref{main2} implies Theorem \ref{main1}, it only remains to check that if $K=\C$, then every element of $G$ is diagonalizable: this is indeed the case, because every element of $G$ has finite order and is thus diagonalisable over $\C$.


\smallskip

{3) Although we will prove the result in any characteristic, it is worth mentioning that the case of positive characteristic follows from the case when $K=\C$, because if $G$ is as in Theroem \ref{main2}, then $|G|$ is prime to $\car(K)$ and thus $G$ admits an embedding in $\GL_d(\C)$. See for instance \cite[Proof of Thm C]{nori}, or \cite[Theorem 3.8]{dixon}.

\smallskip

4) The proof of Theorem \ref{main2} will proceed by induction on the dimension. The letter $F$ will denote a fan and we will reserve the letter $\mathcal{F}$ for maximal fans.

\smallskip

5) Since $S_\mathcal{F}$ is normalized by $G$, $G$ must permute the eigenspaces of $S_\mathcal{F}$. So if $G$ acts primitively on $K^n$ (i.e. does not permute the components of any non-trivial direct sum decomposition of $K^n$), then $S_\mathcal{F}$ must be reduced to scalar matrices and those have bounded index in $G$.

\smallskip

6) If $g \in GL_n(K)$ normalizes $\mathcal{F}$, then it must normalize the root torus $S_\mathcal{F}$ too. In particular $G$
lies in the normalizer of a root torus $S_\mathcal{F}$ and $[G:G \cap S_\mathcal{F}] \leq N$.
\smallskip

\vspace{.5cm}

As seen from items 1) and  2) above, Theorem \ref{main2} implies Theorem \ref{main1}. It turns out that one can also derive Theorem \ref{main2} from Theorem \ref{main1} directly and we explain this in the paragraph below. To be more precise, since we have only stated Theorem \ref{main1} over $\C$ while Theorem \ref{main2} is also valid in positive characteristic,  we are going to prove that Theorem \ref{main2} follows from the assertion that any finite subgroup of $\GL_n(K)$ made of diagonalizable elements admits a normal abelian subgroup of index at most $J(n)$.  Jordan's original proof goes by proving Theorem \ref{main2} first, because its formulation is more adequate for the induction scheme.

\smallskip

\noindent \emph{Proof of the equivalence of Theorems \ref{main1} and \ref{main2}}. Assume the conclusion of Theorem \ref{main1}. Since every element of $G$ is diagonalizable and $A$ is abelian, $A$ is simultaneously diagonalizable and $K^n$ decomposes as a direct sum of weight spaces (i.e. joint eigenspaces) for $A$. Since $A$ is normal in $G$, these eigenspaces are permuted by $G$ and thus $G$ lies in the normalizer $N(S)$ of the root torus  $S$ that acts on $K^n$ by a scalar multiple on each one of the weight spaces of $A$. Note that $A \leq S$. Moreover, if $F$ is an $M$-fan with $M>J(n)$ and $m:=[F:F \cap S] \leq [G:S] \leq J(n)$, we have $f^m \in S$ for all $f \in F$. Thus $F \cap S$ is an $M/m$-fan lying in $S$. Since $M/m > 1$, this implies that $F$ itself lies in $S$. Hence every $M$-fan is contained in $S$. Finally, viewing $S$ as a diagonal subgroup it is straightforward to check that the subgroup generated by all $M$-fans in $S$ is itself an $M$-fan.  Hence $G \cap S$ is the unique maximal $M$-fan in $G$. This completes the claims of Theorem \ref{main2} with $\mathcal{F}:=G \cap S$, $N=J(n)$, $M=N+1$.

\section{Jordan's fundamental equation}\label{sec3}

In this section we begin the proof of Theorem \ref{main2} and obtain Jordan's fundamental equation $(\ref{fund})$ below, which expresses an enumeration of the elements of $G$ into various classes which we are about to describe. The proof of Theorem \ref{main2} will be completed in the next section after a discussion of the fundamental equation.

\smallskip
We will proceed by induction on the dimension $n$.

If $n=1$, then $\GL_1(K)=K^{*}$ is abelian and there is nothing to prove. We now assume the theorem proven for all dimensions $< n$.

Observe that, by the argument at the end of the last section, it will be enough to establish the conclusion of Theorem \ref{main1}, namely the existence of an abelian normal subgroup of  index bounded by some function $J(n)$, as this will automatically imply the conclusion of Theorem \ref{main2} with $N(n)=J(n)$ and $M(n)=N(n)+1$.

If $G$ preserves a direct sum decomposition $K^n=K^{r} \oplus K^{n-r}$, with $1 < r < n$, then we may use the induction hypothesis in the obvious way applying it to the projections $\pi_r(G)$ and $\pi_{n-r}(G)$ to $\GL_r(K)$ and $\GL_{n-r}(K)$ respectively. The conclusion of Theorem \ref{main1} then easily follows as soon as $J(n) \geq \max_{0<r<n}{N(r)N(n-r)}$ and that of Theorem \ref{main2} too as we have just said.

We will use repeatedly the last observation for subgroups of $G$ that preserve such a decomposition. If $g\in G$ is not a scalar matrix, then the centralizer $C_G(g)$ preserves the eigenspace decomposition of $g$ on $K^n$. We can therefore apply this observation to $C_G(g)$ and conclude from the induction hypothesis that $C_G(g)$ contains a unique maximal $M$-fan (for all $M$ larger than a number depending on $n$ only). That is:

\begin{lemma}\label{max} If $g \in G$ is not a scalar matrix, then the centralizer $C_G(g)$ contains a unique maximal $M$-fan.
\end{lemma}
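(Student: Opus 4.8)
The plan is to apply the decomposition argument spelled out just before the lemma directly to the group $C_G(g)$. First I would observe that since $g\in G$ is not scalar and every element of $G$ is diagonalizable, the element $g$ has at least two distinct eigenvalues, so $K^n$ decomposes as a direct sum $K^n = \bigoplus_{i=1}^{s} V_i$ of the eigenspaces $V_i$ of $g$, with $s\ge 2$ and each $V_i$ a proper nonzero subspace. Every element of $C_G(g)$ commutes with $g$ and therefore preserves each eigenspace $V_i$; in particular $C_G(g)$ preserves the nontrivial direct sum decomposition $K^n = V_1 \oplus (\bigoplus_{i\ge 2} V_i)$, which has the shape $K^r \oplus K^{n-r}$ with $0 < r < n$.

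Next I would invoke the induction hypothesis through the observation already recorded in the paragraph preceding the lemma: whenever a finite subgroup $H$ of $\GL_n(K)$ (all of whose elements are diagonalizable, which is inherited by $H = C_G(g) \le G$) preserves such a decomposition, we may project $H$ into $\GL_r(K)$ and $\GL_{n-r}(K)$, apply Theorem \ref{main2} in dimensions $r$ and $n-r$ (both strictly less than $n$) to the images, pull back the resulting maximal $M$-fans, and conclude that $H$ has a normal abelian subgroup of bounded index, hence — again by the equivalence of Theorems \ref{main1} and \ref{main2} established at the end of Section \ref{sec2} — that $H$ itself contains a unique maximal $M$-fan, for every $M$ exceeding a bound depending on $n$ only. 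Applying this with $H = C_G(g)$ yields the claim.

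Strictly, one should remark that $C_G(g)$ acts on $K^n$ possibly without the individual $V_i$ being permuted among themselves in any complicated way — but we do not need the full multi-block refinement: a single splitting into two blocks is enough to feed the induction, since the induction is on dimension and both blocks are lower-dimensional. The only mildly delicate point is uniformity of the constant $M$: we need the same threshold $M(n)$ to work for all non-scalar $g$ simultaneously. This is not a real obstacle, since the bound coming from the induction depends only on $r$ and $n-r$, and we simply take $M(n)$ to dominate $\max_{0<r<n} M(r)$ (together with the analogous bound for $N$); there are finitely many choices of $r$. I expect this bookkeeping of constants to be the only thing requiring care, the geometric input being immediate from the eigenspace decomposition of $g$.
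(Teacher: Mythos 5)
Your proof is correct and is essentially the argument the paper itself gives in the paragraph preceding the lemma: $C_G(g)$ preserves the eigenspace decomposition of $g$, so one applies the induction hypothesis to the projections onto the two blocks and concludes that $C_G(g)$ contains a unique maximal $M$-fan for all $M$ above a bound depending only on $n$. Your extra remarks (that a two-block splitting suffices and that $M(n)$ must be taken to dominate the bounds for all $0<r<n$) are exactly the bookkeeping the paper leaves implicit.
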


We can thus set the following definition:

\begin{definition} An element $g$ is said to be \emph{associated with} an $M$-fan $F$ if $F$ lies in the centralizer $C_G(g)$ and is the unique maximal $M$-fan of $C_G(g)$.
\end{definition}

We denote by $F_g$ the $M$-fan associated with $g$. This definition makes sense (so far, thanks to the induction hypothesis) as soon as $g$ is not a scalar matrix in $\GL_n(K)$ by the remarks above the definition. Note that, by maximality, $F_g$ must contain the subgroup $\Phi$ of $G$ of all scalar matrices in $G$. Moreover, setting $N:=N(n-1)$, it follows from the induction hypothesis that

\begin{equation}\label{borne}
[C_G(g) : F_g]\leq N.
\end{equation}

\vspace{.5cm}
These remarks also have the following three consequences:

\begin{lemma}\label{max0}If $F$ is an $M$-fan of $G$ not entirely made of scalar matrices, then $F$ is contained in a unique maximal $M$-fan $\mathcal{F}$ of $G$.
\end{lemma}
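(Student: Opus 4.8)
The plan is to reduce the statement to Lemma \ref{max} by passing to the centralizer of a non-scalar element of $F$, exploiting that $M$-fans are abelian and that $G$ is finite.

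First I would pick a non-scalar element $g \in F$, which exists precisely because $F$ is not entirely made of scalar matrices. Since an $M$-fan is by definition conjugate to a group of diagonal matrices, $F$ is abelian, so $F \subseteq C_G(g)$. By Lemma \ref{max} the centralizer $C_G(g)$ contains a unique maximal $M$-fan $F_g$; since $C_G(g)$ is finite, the $M$-fan $F$ is contained in at least one maximal $M$-fan of $C_G(g)$, and by uniqueness that maximal $M$-fan must be $F_g$. Hence $F \subseteq F_g$.

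Next I would show that $F_g$ is in fact a maximal $M$-fan of the whole group $G$ and is the only one containing $F$. Suppose $\mathcal{F}$ is any $M$-fan of $G$ containing $F$. Then $g \in F \subseteq \mathcal{F}$ and $\mathcal{F}$ is abelian, so $\mathcal{F} \subseteq C_G(g)$; thus $\mathcal{F}$ is an $M$-fan of $C_G(g)$, and by maximality of $F_g$ in $C_G(g)$ we get $\mathcal{F} \subseteq F_g$. Now $F_g$ is itself an $M$-fan of $G$, so by finiteness it lies in some maximal $M$-fan $\mathcal{F}_0$ of $G$; applying the previous sentence to $\mathcal{F}_0 \supseteq F$ gives $\mathcal{F}_0 \subseteq F_g$, whence $\mathcal{F}_0 = F_g$ and $F_g$ is maximal in $G$. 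Finally, if $\mathcal{F}'$ is \emph{any} maximal $M$-fan of $G$ containing $F$, the same argument gives $\mathcal{F}' \subseteq F_g$, and maximality of $\mathcal{F}'$ forces $\mathcal{F}' = F_g$. This yields both existence and uniqueness, with $\mathcal{F} = F_g$.

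I do not anticipate a genuine obstacle here: the argument is a formal consequence of Lemma \ref{max}, the abelianness of $M$-fans, and the finiteness of $G$. The only point worth stating explicitly is that ``$C_G(g)$ has a \emph{unique} maximal $M$-fan'' does force every $M$-fan of $C_G(g)$ to be contained in it — which is immediate, since in a finite group every $M$-fan is contained in some maximal one. (One could instead route the proof through the minimal root torus $S_F$ containing $F$ and the identity $G \cap S_F$, but going via centralizers stays closest to the already-established Lemma \ref{max}.)
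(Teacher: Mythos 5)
Your proof is correct and follows essentially the same route as the paper's: both pass to the centralizer $C_G(f)$ of a non-scalar element $f\in F$, note that any $M$-fan containing $F$ is abelian and hence lies in $C_G(f)$, and then invoke Lemma \ref{max}. You simply make explicit the finiteness and maximality bookkeeping that the paper leaves implicit.
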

\begin{proof} Let $f \in F$ be a non-scalar element. If $F_1$ is an $M$-fan containing $F$, then $F_1$ must commute with all elements of $F$ and thus lie in $C_G(f)$, the centralizer of $f$. Therefore $F_1$ must lie in the unique maximal $M$-fan of $C_G(f)$. \end{proof}

Let $F$ be an $M$-fan of $G$ not contained in the scalar matrices $\Phi$ and let $\mathcal{F}$ be the maximal $M$-fan of $G$ containing $F$. Since $\mathcal{F}$ is contained in the centralizer $C_G(F)$, it must be the maximal $M$-fan there too and, by the induction hypothesis, we must have $[C_G(F):\mathcal{F}] \leq N$.

\begin{lemma}\label{enef} Suppose $\Phi \lneqq F \lneqq \mathcal{F}$. Then the number $n_F$ of elements of $G$ associated with $F$ is divisible by $|\mathcal{F}|$ and $\frac{n_F}{|\mathcal{F}|} \leq N$.
\end{lemma}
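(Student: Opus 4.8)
The plan is to realise the set $X_F:=\{g\in G:\ F_g=F\}$ (so that $n_F=|X_F|$) as a union of left cosets of $\mathcal{F}$ inside the centralizer $C_G(F)$; both assertions then drop out, using the inequality $[C_G(F):\mathcal{F}]\leq N$ recorded just before the statement.

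First I would observe that $X_F\subseteq C_G(F)$: if $F_g=F$, then $F\leq C_G(g)$, i.e. $g$ commutes with every element of $F$, so $g\in C_G(F)$. Second, for \emph{every} non-scalar $g\in C_G(F)$ one has $F\subseteq F_g\subseteq\mathcal{F}$. Indeed $F$ is an $M$-fan lying in $C_G(g)$ and not made of scalars, so by the argument of Lemma \ref{max0} applied inside $C_G(g)$ it is contained in the unique maximal $M$-fan $F_g$ of $C_G(g)$; and $F_g$ is in turn an $M$-fan of $G$ containing $F$ and not made of scalars, hence by Lemma \ref{max0} it lies in the unique maximal $M$-fan of $G$ containing $F$, namely $\mathcal{F}$.

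The main point is the coset step: for $g\in X_F$ and $h\in\mathcal{F}$ one has $hg\in X_F$. Since $\mathcal{F}$ is abelian and contains $F$, we have $\mathcal{F}\leq C_G(F)$, hence $hg\in C_G(F)$. Moreover $hg$ is non-scalar: otherwise $g=h^{-1}(hg)\in\mathcal{F}$, which would give $\mathcal{F}\leq C_G(g)$ and therefore $\mathcal{F}\subseteq F_g=F$, contradicting $F\lneqq\mathcal{F}$. By the previous paragraph $F\subseteq F_{hg}\subseteq\mathcal{F}$, so it only remains to exclude $F\lneqq F_{hg}$. If that occurred, $F_{hg}$ would be an $M$-fan contained in the abelian group $\mathcal{F}$, hence centralized by $h$; combined with $F_{hg}\leq C_G(hg)$ this yields $g=h^{-1}(hg)\in C_G(F_{hg})$, i.e. $F_{hg}\leq C_G(g)$, whence $F_{hg}\subseteq F_g=F$, a contradiction. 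Thus $F_{hg}=F$ and $hg\in X_F$.

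Consequently $X_F$ is a disjoint union of cosets $\mathcal{F}g$ contained in $C_G(F)$, each of cardinality $|\mathcal{F}|$; hence $|\mathcal{F}|$ divides $n_F$, and $n_F=|X_F|\leq|C_G(F)|=[C_G(F):\mathcal{F}]\,|\mathcal{F}|\leq N|\mathcal{F}|$, i.e. $n_F/|\mathcal{F}|\leq N$. The only genuinely delicate step is the coset step, and the key there is the elementary remark that every $M$-fan sandwiched between $F$ and $\mathcal{F}$ is centralized by all of $\mathcal{F}$ (because $\mathcal{F}$ is abelian), which is exactly what makes the condition "$F_g=F$" stable under left translation by $\mathcal{F}$.
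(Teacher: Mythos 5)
Your proof is correct and follows essentially the same route as the paper: show $X_F\subseteq C_G(F)$, show translation by $\mathcal{F}$ preserves $X_F$ using that any sandwiched fan $F\subseteq F_{hg}\subseteq\mathcal{F}$ is centralized by $\mathcal{F}$ and hence lies in $C_G(g)$, and then bound via $[C_G(F):\mathcal{F}]\leq N$. The only cosmetic difference is that you use left cosets $\mathcal{F}g$ where the paper uses right cosets $g\mathcal{F}$, and you spell out as a preliminary claim the inclusion $F\subseteq F_g\subseteq\mathcal{F}$ for non-scalar $g\in C_G(F)$, which the paper folds into the running argument.
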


\begin{proof} If $n_F=0$ there is nothing to prove, so we assume $n_F \geq 1$. Every element associated with $F$ lies in the centralizer $C_G(F)$. Moreover, if $g \in C_G(F)$ is associated with $F$ and $f \in \mathcal{F}$, then $gf$ is also associated with $F$, i.e. $F_{gf}=F_g=F$. Indeed, since $F \lneqq \mathcal{F}$ we must have $gf \notin \Phi$ (as otherwise $C_G(g)=C_G(f)$ contains $\mathcal{F}$) and by Lemma \ref{max} there is a unique maximal $M$-fan $F_{gf}$ in $C_G(gf)$. Since $F \subset C_G(gf)$ we have $F \subset F_{gf} \subset C_G(gf)$. Moreover $F_{gf}$ is contained in $\mathcal{F}$ and must therefore commute with $f$, and hence also with $g$. It follows that $F \subset F_{gf} \subset C_G(g)$ and $F=F_{gf}$ by maximality of $F$.

Consequently, the set of elements of $G$ associated with $F$ is a union of cosets of $\mathcal{F}$ all lying in $C_G(F)$. Since $C_G(F)$ contains $\mathcal{F}$ as a subgroup of index at most $N$ the result follows.
\end{proof}

And for maximal fans we have:

\begin{lemma}\label{maxfan} Let $\mathcal{F}\neq \Phi$ be a maximal $M$-fan in $G$. Then the number $n_\mathcal{F}$ of non-scalar elements $g$ in $G$ which are associated with $\mathcal{F}$ is $n_\mathcal{F}=|C_G(\mathcal{F})| - |\Phi|$, and $[C_G(\mathcal{F}):\mathcal{F}] \leq N$.
\end{lemma}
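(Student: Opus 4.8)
The plan is to reduce the statement to the single observation that the non-scalar elements of $G$ associated with $\mathcal{F}$ are precisely the non-scalar elements of the centralizer $C_G(\mathcal{F})$. Granting this, the count is immediate: every scalar matrix commutes with everything, and $\Phi$ is by definition the group of \emph{all} scalar matrices of $G$, so the scalar elements of $C_G(\mathcal{F})$ are exactly those of $\Phi$; hence the non-scalar elements of $C_G(\mathcal{F})$ number $|C_G(\mathcal{F})| - |\Phi|$, which is $n_\mathcal{F}$. The inequality $[C_G(\mathcal{F}):\mathcal{F}] \leq N$ is essentially already in hand: since $\mathcal{F} \neq \Phi$, choose a non-scalar $f \in \mathcal{F}$; then $\mathcal{F} \subseteq F_f$, so $\mathcal{F} = F_f$ by maximality of $\mathcal{F}$ in $G$, and as $C_G(\mathcal{F}) \subseteq C_G(f)$ with both groups containing $\mathcal{F} = F_f$, inequality $(\ref{borne})$ gives $[C_G(\mathcal{F}):\mathcal{F}] \leq [C_G(f):F_f] \leq N$.

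So the task is the equivalence. One direction is immediate from the definition: if $g$ is associated with $\mathcal{F}$ then $\mathcal{F} \subseteq C_G(g)$, i.e.\ $g$ commutes with every element of $\mathcal{F}$, whence $g \in C_G(\mathcal{F})$. For the converse, take a non-scalar $g \in C_G(\mathcal{F})$. Then $\mathcal{F} \subseteq C_G(g)$, so $\mathcal{F}$ is a non-scalar $M$-fan lying inside $C_G(g)$; let $F_g$ be the unique maximal $M$-fan of $C_G(g)$ furnished by Lemma \ref{max}. Since $g$ is non-scalar and diagonalizable, $C_G(g)$ preserves the non-trivial eigenspace decomposition of $g$, so the induction hypothesis applies to $C_G(g)$ and all the lemmas proved so far hold with $C_G(g)$ in place of $G$; in particular Lemma \ref{max0}, applied inside $C_G(g)$ to the non-scalar $M$-fan $\mathcal{F}$, shows $\mathcal{F} \subseteq F_g$. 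But $F_g$ is itself an $M$-fan of $G$ and $\mathcal{F}$ is a maximal one, so $\mathcal{F} \subseteq F_g$ forces $\mathcal{F} = F_g$. Thus $\mathcal{F}$ is the unique maximal $M$-fan of $C_G(g)$, i.e.\ $g$ is associated with $\mathcal{F}$, as desired.

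The one step that is not pure bookkeeping is ``$\mathcal{F} \subseteq F_g$'': the word ``maximal'' alone does not guarantee that the unique maximal $M$-fan of $C_G(g)$ swallows every $M$-fan of $C_G(g)$, so Lemma \ref{max0} must be invoked — and must be invoked for the \emph{reducible} subgroup $C_G(g)$, not for $G$, which is legitimate only because the centralizer of a non-scalar element is reducible and hence covered by the inductive hypothesis. Everything else follows by unwinding the definitions together with the index bound $(\ref{borne})$.
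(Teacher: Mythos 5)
Your proof is correct and follows the same route as the paper, which simply observes that a non-scalar $g$ is associated with $\mathcal{F}$ if and only if $\mathcal{F}\leq C_G(g)$, i.e.\ $g\in C_G(\mathcal{F})$, and derives the index bound from $(\ref{borne})$; you have merely filled in the details of both directions. (Minor aside: since the poset of $M$-fans of $C_G(g)$ is finite, uniqueness of the maximal one already forces it to contain every $M$-fan of $C_G(g)$, so the step you flag as delicate is automatic.)
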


\begin{proof} A non-scalar element $g$ is associated with $\mathcal{F}$ if and only if $\mathcal{F} \leq C_G(g)$, i.e. $g \in C_G(\mathcal{F})$. 
The bound follows from $(\ref{borne})$.
\end{proof}



\vspace{.5cm}

The strategy of Jordan's proof consists in enumerating the elements of $G$ according to their associated $M$-fan. Let $\Phi$ be the scalar matrices in $G$. We may decompose $G$ as the \emph{disjoint} union $$G=\Phi \cup_{F} \{g\text{, } g \text{ associated with }F \}$$ where the union is taken over fans arising as maximal $M$-fans of centralizers of non scalar elements of $G$. We will split this union into four disjoint parts, $$G=\Phi \cup G_1 \cup G_2 \cup G_3,$$ where $G_1$ is the subset of $g$'s not in $\Phi$ such that $F_g=\Phi$, $G_2$ is the subset of $g$'s not in $\Phi$ such that $F_g$ contains $\Phi$ strictly but is not the maximal $M$-fan $\mathcal{F}_g$ which contains it by Lemma \ref{max0}, and finally $G_3$ is the remaining subset of those $g$'s not in $\Phi$ for which $F_g$ is not $\Phi$ and is maximal in $G$. We now consider each subset $G_i$ one after the other.
\bigskip

1) We first enumerate the elements of $G_1$, that is the $g$'s outside $\Phi$ which are associated with $\Phi$. This subset is invariant under conjugation by $G$. Also it is clearly a union of cosets of $\Phi$, for if $\phi \in \Phi$, then $C_G(g\phi)=C_G(g)$ and thus $g\phi$ is also associated with $\Phi$. It follows that conjugation by $G$ permutes those $\Phi$-cosets.

The stabiliser $N_G(g\Phi)$ of a $\Phi$-coset $g\Phi$ under the $G$-action by conjugation must have index at most $n$ in the $C_G(g)$. Indeed,  if $h\in G$ has $hg\Phi h^{-1}=g\Phi$, then $hgh^{-1}=g\phi$ for some $\phi \in \Phi$. It follows that $\det(\phi)=1$ and thus $\phi$ is an $n$-th root of unity. We conclude that $[N_G(g\Phi):C_G(g)] \leq n$.

Therefore the number of elements in the $G$-conjugacy class of the coset $g\Phi$ equals $$\frac{|G|}{N_G(g\Phi)} |\Phi|= |G| \frac{1}{[N_G(g\Phi):C_G(g)]} \frac{1}{[C_G(g) : \Phi]}=|G| \frac{1}{\lambda}.$$ Enumerating all such conjugacy classes, we find: $$|G_1|=|G|\left( \frac{1}{\lambda_1} + ... + \frac{1}{\lambda_{k_1}}\right).$$
where each $\lambda_i$ is a positive integer of size at most $nN$ by $(\ref{borne})$ and the remark above.

\bigskip

2) We now pass to the subset $G_2$. Clearly $G_2$ is stable under conjugation by $G$. Let $F$ be an $M$-fan of $G$ with maximal $M$-fan $\mathcal{F}$ such that $\Phi \lneqq F \lneqq \mathcal{F}$. Let $n_F$ be the number of $g$'s which are associated with $F$. By Lemma \ref{enef}, $n_F/|\mathcal{F}|$ is an integer of size at most $ N$.

Grouping together the fans that are conjugate to $F$, we obtain $\frac{|G|}{|N_G(F)|}$ different fans, where $N_G(F)$ is the normalizer of $F$ in $G$. Note that \begin{equation}\label{normali}[N_G(F):C_G(F)] \leq n!\end{equation} since $N_G(F)$ permutes the weight spaces of $F$ and hence a subgroup of index at most $n!$ will preserve them and thus commute with $F$.

It follows that the number of elements that are associated with a fan lying in the $G$-conjugacy class of $F$ equals $$ n_F \frac{|G|}{|N_G(F)|}=|G| \frac{1}{[N_G(F) : C_G(F)]} \frac{ n_F/|\mathcal{F}|}{[C_G(F) : \mathcal{F}]}=|G| \frac{\nu}{\mu},$$ and thus enumerating the different conjugacy classes $$|G_2|=|G|\left( \frac{\nu_1}{\mu_1} + ... + \frac{\nu_{k_2}}{\mu_{k_2}}\right), $$ where the $\nu_i \leq N$ and  $\mu_i\leq n!N$  are positive integers.

\bigskip

3) Finally we consider the subset $G_3$ of those non-scalar $g$'s such that $F_g$ is maximal in $G$ and different from $\Phi$. Clearly this set is invariant under conjugation by $G$. Given a maximal $M$-fan $\mathcal{F}$, the number $n_\mathcal{F}$ of elements of $G$ which are associated with $\mathcal{F}$ equals $|C_G(\mathcal{F})| - |\Phi|$ according to Lemma \ref{maxfan}.

Setting $\omega=[N_G(\mathcal{F}):C_G(\mathcal{F})]$ and $q=[N_G(\mathcal{F}):\Phi]$, the number of elements that are associated with a maximal fan conjugate to $\mathcal{F}$ is $$ n_\mathcal{F} \frac{|G|}{|N_G(\mathcal{F})|}= |G|\left(\frac{1}{\omega}-\frac{1}{q}\right),$$ where $\omega$ and $q$ are positive integers with $\omega \leq n!$ and $q=[C_G(\mathcal{F}):\Phi]\omega  \ge 2\omega$.


Summing over the conjugacy classes, we get: $$ |G_3|= |G| \left( \left(\frac{1}{\omega_1}- \frac{1}{q_1}\right)+ ... + \left(\frac{1}{\omega_{k_3}}- \frac{1}{q_{k_3}}\right) \right).$$

\bigskip

Combining all three cases we have thus completed our enumeration of $G$ and we obtain:

\begin{proposition}[Jordan's fundamental equation]\label{jor} Let $G$ be a finite subgroup of $\GL_n(K)$ all of whose elements are diagonalisable, and $\Phi$ the subgroup of scalar matrices in $G$. Then there are positive integers $q_i$ dividing $g:=|G|/|\Phi|$ such that


\begin{equation}\label{fund}
|G|= |\Phi| + |G|\sum_{i=1}^{k_1} \frac{1}{\lambda_{i}} +  |G|\sum_{i=1}^{k_2} \frac{\nu_i}{\mu_i} +
|G|\sum_{i=1}^{k_3}
\left(\frac{1}{\omega_i} - \frac{1}{q_i}\right),
\end{equation}
where $k_i$, $\lambda_i$, $\nu_i$, $\mu_i$ and $\omega_i$ are non-negative integers of size at most $2n!N$ (recall that $N=N(n-1)$ is the bound from Theorem \ref{main2} under the induction hypothesis). In particular,
\begin{equation}\label{nn}
  \frac{1}{g}= \frac{1}{q_1}+...+\frac{1}{q_{k_3}} -\frac{b}{a},
\end{equation}
where $\frac{b}{a}$ is an irreducible fraction whose numerator and denominator are bounded in terms of $n$ only.
\end{proposition}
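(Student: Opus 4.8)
The plan is to assemble the three enumerations carried out above. Since $G = \Phi \cup G_1 \cup G_2 \cup G_3$ is a disjoint union, taking cardinalities gives $|G| = |\Phi| + |G_1| + |G_2| + |G_3|$, and substituting the expressions already obtained for $|G_1|$, $|G_2|$ and $|G_3|$ produces equation $(\ref{fund})$ verbatim. Here the integers $q_i$ are the numbers $[N_G(\mathcal{F}):\Phi]$ attached to the $G$-conjugacy classes of non-scalar maximal $M$-fans $\mathcal{F}$, and since $\Phi \leq N_G(\mathcal{F}) \leq G$, Lagrange's theorem gives $q_i \mid [G:\Phi] = g$, as claimed.

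Next I would record the size bounds. The case analysis above already gives $\lambda_i \leq nN$, $\nu_i \leq N$, $\mu_i \leq n!\,N$, $\omega_i \leq n!$ and $q_i = [C_G(\mathcal{F}):\Phi]\,\omega_i \geq 2\omega_i$; in particular all of $\lambda_i,\nu_i,\mu_i,\omega_i$ are at most $2n!\,N$ (the $q_i$, of course, need not be bounded --- that is the whole point). The one remaining point is to bound the number of summands $k_1,k_2,k_3$. For this I would use that each of the three partial sums is $<1$, being $|G_j|/|G|$ with $|G_j| < |G|$, while each individual summand is bounded below: $\frac1{\lambda_i} \geq \frac1{nN}$, $\frac{\nu_i}{\mu_i} \geq \frac1{n!\,N}$, and $\frac1{\omega_i} - \frac1{q_i} \geq \frac1{\omega_i} - \frac1{2\omega_i} = \frac1{2\omega_i} \geq \frac1{2\,n!}$. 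Dividing then yields $k_1 \leq nN$, $k_2 \leq n!\,N$ and $k_3 \leq 2\,n!$, so each $k_i \leq 2n!\,N$.

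Finally, to obtain $(\ref{nn})$ I would divide $(\ref{fund})$ by $|G|$ and use $|\Phi|/|G| = 1/g$, which gives
\[
\frac1g = \sum_{i=1}^{k_3}\frac1{q_i} - \left( -1 + \sum_{i=1}^{k_1}\frac1{\lambda_i} + \sum_{i=1}^{k_2}\frac{\nu_i}{\mu_i} + \sum_{i=1}^{k_3}\frac1{\omega_i} \right).
\]
Writing the bracketed rational number as $b/a$ in lowest terms, its denominator $a$ divides the least common multiple of the boundedly many integers $\lambda_i,\mu_i,\omega_i$, each of which is $\leq 2n!\,N$, so $a$ is bounded in terms of $n$; and the bounds of the previous paragraph give $\left|\sum\frac1{\lambda_i}\right| \leq 1$, $\left|\sum\frac{\nu_i}{\mu_i}\right| \leq 1$ and $\sum\frac1{\omega_i} \leq 1 + \sum\frac1{q_i} \leq 1 + k_3$, so $|b/a|$ --- and hence $|b| = a\,|b/a|$ --- is bounded in terms of $n$ as well. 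The only mildly delicate steps are the lower bounds on the individual summands, which are exactly what force $k_1,k_2,k_3$ to be bounded, and the verification that reducing $b/a$ to lowest terms preserves the bound on the numerator; everything else is bookkeeping, and I do not expect a genuine obstacle.
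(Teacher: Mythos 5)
Your proposal is correct and follows essentially the same route as the paper: assemble the disjoint decomposition $G=\Phi\cup G_1\cup G_2\cup G_3$ with the counts already computed, bound each $k_i$ by noting every summand is bounded below by $1/(2n!N)$ while the total is less than $1$, and rearrange to isolate $\sum 1/q_i$, with $a$ controlled by the lcm of the boundedly many bounded denominators. The only cosmetic difference is that you bound $k_1,k_2,k_3$ separately where the paper bounds their sum at once; nothing of substance changes.
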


To prove Proposition \ref{jor} it remains only to show the bound on the number $k_i$ of elements in each sum and then derive $(\ref{nn})$. But this follows from the equation $(\ref{fund})$ and from the bounds previously obtained, because $\frac{1}{\omega_i} -  \frac{1}{q_i} \geq \frac{1}{2\omega_i}$ for each $i=1,...,k_3$ and thus each term in the above sums contributes at least $|G|/{2n!N}$ forcing $k_1+k_2+k_3 \leq 2n!N$.

Showing $(\ref{nn})$ is a simple matter of rearranging $(\ref{fund})$:
\begin{equation}\label{eq}
\frac{|G|}{|\Phi|} \left(\sum_{i=1}^{k_1} \frac{1}{\lambda_{i}} +  \sum_{i=1}^{k_2} \frac{\nu_i}{\mu_i} +
\sum_{i=1}^{k_3} \frac{1}{\omega_i}   -  1 \right) = \sum_{i=1}^{k_3} \frac{|G|}{|\Phi|}\frac{1}{q_i} - 1 .
 \end{equation}

Then we let $g:=\frac{|G|}{|\Phi|}$ and $\frac{b}{a}:=\sum_{i=1}^{k_1} \frac{1}{\lambda_{i}} +  \sum_{i=1}^{k_2} \frac{\nu_i}{\mu_i} +
\sum_{i=1}^{k_3}
 \frac{1}{\omega_i}   -  1$, where $\frac{b}{a}$ is an irreducible fraction.  We thus get $(\ref{nn})$.

Note further that $a$ is bounded in terms of $n$ only, indeed it cannot exceed the least common multiple of at most $2n!N$ integers of size at most $n!N$. A similar bound holds for $b$. This completes the proof of Proposition \ref{jor}.

\section{Proof of Theorem \ref{main2}}\label{sec4}

It remains to discuss the fundamental equation $(\ref{fund})$ according to the possible values of the integers $\lambda_i$, $\nu_i$, $\mu_i$, $\omega_i$ and $q_i$.

The proof will rest on the following elementary lemma about fractions:

\begin{lemma}\label{fraction} Consider the following equation, where all variables are positive integers:  $$ \frac{1}{g}= \frac{1}{q_1}+...+\frac{1}{q_k} -\frac{b}{a}. $$ Suppose that $q_i < g$ for all $i$, then $ g \leq f(k,a),$ where $f(k,a)$ is a function of $k$ and $a$ only. One may take $f(k,a)=(k!a)^{2^k}$.
\end{lemma}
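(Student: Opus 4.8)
\textbf{Proof proposal for Lemma \ref{fraction}.}

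The plan is to induct on $k$. The key structural observation is that in the equation $\frac{1}{g} = \frac{1}{q_1} + \dots + \frac{1}{q_k} - \frac{b}{a}$ with all $q_i < g$, the right-hand side, being equal to $\frac{1}{g} > 0$, is a \emph{positive} rational number that is nonetheless quite small. I would first isolate one term, say the largest $\frac{1}{q_k}$, and rewrite the identity as
\begin{equation*}
\frac{1}{q_k} - \frac{1}{g} = \frac{b}{a} - \frac{1}{q_1} - \dots - \frac{1}{q_{k-1}}.
\end{equation*}
The left-hand side is $\frac{g - q_k}{q_k g}$, which is strictly positive since $q_k < g$, hence at least $\frac{1}{q_k g} \ge \frac{1}{g^2}$ in absolute value — this is the crucial "gap" estimate. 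On the other hand, the right-hand side is a rational number whose denominator divides $a \cdot q_1 \cdots q_{k-1} \le a \, g^{k-1}$.

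Next I would branch on whether the right-hand side vanishes. If it is zero, then $\frac{1}{q_k} = \frac{1}{g}$, contradicting $q_k < g$; so it is a nonzero rational with denominator at most $a g^{k-1}$, hence its absolute value is at least $\frac{1}{a g^{k-1}}$. Comparing with the left-hand side, which equals $\frac{1}{q_k}-\frac1g \le \frac{1}{q_k} \le 1$ and more usefully can be bounded, the cleaner route is to combine both sides into the single equation and clear denominators: multiply through by $a q_1 \cdots q_k g$ and observe that $a q_1 \cdots q_k \bigl(1 - \sum \frac{1}{q_i}\bigr) = -\frac{a q_1 \cdots q_k}{g}\bigl(1 - \frac{bg}{a}\cdot\frac{1}{\,\cdot\,}\bigr)$; but this gets notationally heavy. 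Instead I would argue directly: set $Q = q_1 \cdots q_k$. Then $\frac{a}{g} = a\sum \frac1{q_i} - b$ has the form (integer)$/$(divisor of $Q$), so $\frac{a}{g}$ has denominator dividing $Q$, i.e. $g \mid a Q$. Since each $q_i \le g$, we get $g \mid a g^k$ trivially, which is not yet enough — so the real content must come from the positivity/smallness of $\frac1g$.

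The honest inductive step: rewrite as $\frac{1}{q_1} + \dots + \frac{1}{q_{k-1}} = \frac{1}{g} + \frac{b}{a} - \frac{1}{q_k} = \frac{b'}{a'}$ where $a' \mid a q_k g \le a g^2$ (after reducing, $a' \le a g^2$) and $b' \ge 1$ is a positive integer since the left side is positive; also each $q_i < g \le g'$ where we must re-set the ambient bound. The point is that now the left side is a sum of $k-1$ unit fractions equal to a positive rational $\frac{b'}{a'}$ with controlled denominator, but to invoke induction I need it in the form $\frac1{g'} = \sum_{i<k}\frac1{q_i} - \frac{b''}{a''}$ with $g' > q_i$. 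I would take $g' = q_{k-1}$ (assuming $q_{k-1}$ is the smallest, so $q_i \ge q_{k-1}$... wait, I need $q_i < g'$, so order them increasingly and pull out the \emph{smallest}): with $q_1 \le \dots \le q_k$, set aside $q_1$: then $\frac{1}{q_1} = \frac1g + \frac ba - \sum_{i=2}^k \frac1{q_i}$, i.e. reindexing, $\frac{1}{q_1}$ plays the role of "$\frac1g$" and $q_2,\dots,q_k$ play the role of the unit fractions, \emph{provided} $q_i > q_1$ for $i \ge 2$. Equalities among the $q_i$ are handled by merging: if $q_1 = q_2 = \dots = q_j < q_{j+1}$, then $\frac{j}{q_1} = \dots$, and since $j \le k$ and $\frac{1}{q_1} \le 1$, either $g$ is bounded directly (if $q_1$ is small) or $q_1$ is large, forcing $\frac jg$-type bounds. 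The main obstacle I anticipate is precisely this bookkeeping with the new numerator $b$ (which can grow under iteration) and with ties among the $q_i$; controlling $b$ is what produces the doubly-exponential $(k!a)^{2^k}$, since at each of the $k$ stages the denominator can roughly square and pick up a factorial factor from clearing $k$ terms. I would therefore prove by induction the slightly stronger statement that bounds $g$ \emph{and} the reduced numerator simultaneously, track the recursion $a_{k} \le (k \cdot a_{k-1})^2$ or similar, and solve it to get the stated $f(k,a) = (k!a)^{2^k}$.
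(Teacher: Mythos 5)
Your overall strategy --- induct on $k$, peel off one unit fraction per step, and accept that the denominator of the correction fraction roughly squares at each stage, producing a bound of shape $(k!a)^{2^k}$ --- is exactly the paper's. But the reduction step itself is never correctly pinned down, and the single arithmetic fact that makes it work is missing from your proposal. The paper orders the $q_i$ so that $q_k$ is the \emph{smallest}; positivity of $\frac{1}{g}$ then gives $\frac{b}{a} < \sum_i \frac{1}{q_i} \leq \frac{k}{q_k}$, hence $q_k \leq ka$. Consequently $\frac{c}{d} := \frac{b}{a} - \frac{1}{q_k}$ has denominator $d \leq \text{lcm}(a,q_k) \leq ka^2$, it is positive because $\frac{b}{a} > \sum_{i \geq 2}\frac{1}{q_i} \geq \frac{1}{q_k}$ (which in turn uses $q_1 < g$ for the \emph{largest} $q_1$), and so $\frac{1}{g} = \frac{1}{q_1} + \dots + \frac{1}{q_{k-1}} - \frac{c}{d}$ is again an instance of the lemma with one fewer term and denominator at most $ka^2$, giving $f(k,a) = f(k-1,ka^2)$ with $f(1,a)=a^2$. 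The a priori bound $q_{\min} \leq ka$ is the whole point: it is what allows one unit fraction to be absorbed into $\frac{b}{a}$ without losing control of the denominator. You never derive it.

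Both of your concrete attempts at the reduction fail for identifiable reasons. Absorbing $\frac{1}{q_k}$ together with $\frac{1}{g}$ into a new fraction $\frac{b'}{a'}$ gives $a' \mid a q_k g$: the unknown $g$ now sits inside the very denominator the induction is supposed to control, so the induction hypothesis yields nothing (you half-recognize this). Your alternative, letting the smallest $q_1$ play the role of $g$, produces $\frac{1}{q_1} = \frac{1}{g} + \frac{b}{a} - \sum_{i \geq 2}\frac{1}{q_i}$, in which the remaining unit fractions enter with a \emph{minus} sign and the correction term with a \emph{plus} sign --- the opposite of the form required by the lemma --- so no reindexing makes the induction hypothesis applicable. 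The case analysis on ties among the $q_i$ is a red herring (the correct reduction needs none), and the proposed strengthening that simultaneously bounds the numerator is unnecessary: only the denominator of the correction fraction needs to be tracked, and the "gap estimate" and divisibility observations ($g \mid aQ$) in your first two attempts play no role in a working proof.
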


\begin{proof} The proof proceeds by induction on $k$. If $k=1$, then $\frac{1}{q_1} \geq \frac{b}{a}$ implies $q_1 \leq a$ and $\frac{1}{g} \geq \frac{1}{q_1a} \geq \frac{1}{a^2}$, so $g \leq a^2=:f(1,a)$.

Suppose the lemma proven for all indices $\leq k-1$. Without loss of generality, we may assume that $\frac{1}{q_1} \leq ... \leq \frac{1}{q_k}$. Then $\frac{1}{q_2} + ... + \frac{1}{q_k} < \frac{b}{a}$, for $q_1<g$.

It follows that $\frac{b}{a} > \frac{1}{q_2} + ... + \frac{1}{q_k} \geq \frac{1}{q_k}$. We may thus write $\frac{c}{d}=\frac{b}{a}-\frac{1}{q_k}$, where $c,d$ are positive integers and $\frac{c}{d}$ is an irreducible fraction.

Since $\frac{1}{g} \leq \frac{k}{q_k} - \frac{b}{a}$, we get $q_k \leq ka$ and thus $d=\text{lcm}(a,q_k) \leq ka^2$. We obtain $$\frac{1}{g}= \frac{1}{q_1}+...+\frac{1}{q_{k-1}} -\frac{c}{d}. $$ Applying the induction hypothesis we conclude that $g \leq f(k-1,ka^2)=:f(k,a)$.

\end{proof}

We now complete the proof of Theorem \ref{main2}. If $k_3=0$, then we see from $(\ref{nn})$ that $\frac{-b}{a}=\frac{1}{g}$ so $b=-1$ and $g=a$ is bounded in terms of $n$ only by Proposition \ref{jor}.  Hence $\Phi$ has bounded index in $G$ and we are done.


Assume $k_3 \ge 1$. If $q_i=g$ for some $i$, then $G=N_G(\mathcal{F}_i)$ and $[G:\mathcal{F}_i] = \omega_i [ C_G(\mathcal{F}_i),\mathcal{F}_i] \leq n!N$ by $(\ref{borne})$. So $\mathcal{F}_i$ is the desired abelian normal subgroup of bounded index and we are done.

The right hand side of $(\ref{eq})$ is a non-negative positive integer. If it is zero, then $k_3=1$, $q_1=g$ and we fall back in the previous case. Otherwise it is positive and thus $b>0$, so that we are in the situation of Lemma \ref{fraction}. We conclude that $g$ is bounded in terms of $n$ only and again we are done.


Theorem \ref{main2} is now proven in full.

\bigskip

\begin{remark} We mention in passing that the proof of Landau's theorem \cite{landau} that there are only finitely many finite groups $G$ with exactly $k$ conjugacy classes $\emph{cl}_1,\ldots,\emph{cl}_k$, is based on a similar, and easier, diophantine equation, namely $$1 = \frac{1}{q_1} + \ldots +\frac{1}{q_k}$$ where $q_i=|G|/|\emph{cl}_i|$. This is an instance of an \emph{Egyptian fraction} \cite{bloom-elsholtz}, and a simple argument \cite{newman} implies that $k \ge \frac{1}{\log 4}\log \log |G|$.
\end{remark}

\section{Platonic solids and the finite subgroups of $\SO_3(\R)$}\label{sec5}

As an illustration and for the sake of comparison, we recall in this section a proof of the classification of finite subgroups of $\SO_3(\R)$ following Klein's method, as given in many textbooks, e.g. see \cite{senechal, zassenhauss2}.

Let $G$ be a finite subgroup of $\SO_3(\R)$. Every non trivial element of $G$ is a rotation around some axis. Let $X$ be the set of all axes that arise as axes of rotations in $G$. Clearly $G$ permutes $X$ because if $x_h$ is the axis of $h \in G$, then $gx_h=x_{ghg^{-1}}$. The determination of all possible groups $G$ proceeds via a double counting argument, or class equation, which enumerates the elements of $G$ according to their fixed axis. Given an axis $x \in X$, let $G_x$ be the subset of elements of $G$ whose axis is $x$, to which we adjoin the identity. Then $G_x$ is a subgroup. It usually coincides with the centralizer of $x$, except when $x$ is a flip (i.e. has angle $\pi$). Enumerating the elements of $G$ starting with the identity element, we can write: $$|G|=1+\sum_{x\in X} (|G_x|-1)$$

To go further, we group together the terms corresponding to two axes that are $G$-congruent (i.e. $x \sim y$ if there is $g \in G$ with $y=gx$). We obtain
\begin{equation}\label{eq2}
|G|=1+ \sum_{\text{classes of x }\in X} \frac{|G|}{|Stab_x|}(|G_x|-1),
\end{equation} where $Stab_x$ is the stabilizer of $x$ in $G$. It is a subgroup of $G$. Now observe that an element $g$ of $G$ which preserves $x$ may be only of two possible forms: either it fixes both poles of $x$, in which case $g$ belongs to $G_x$, or it permutes the two poles of $x$. It follows that $G_x$ is a subgroup of $Stab_x$ of index either $1$ or $2$. Let $x_1,...,x_r,x_{r+1},...,x_{r+s}$ be a set of representatives of the $G$-orbits in $X$ such that $[Stab_{x_i}:G_{x_i}]=1$ if $1 \le i \le r$ and
 $[Stab_{x_i}:G_{x_i}]=2$ if $r+1 \le i \le r+s$. Setting $g_i=|G_{x_i}|$, dividing by $|G|$ in $(\ref{eq2})$ we obtain,

\begin{equation}\label{eq3bis}
1=\frac{1}{|G|}+ \sum_{i=1}^r (1-\frac{1}{g_i}) + \frac{1}{2}\sum_{i=r+1}^{r+s} (1-\frac{1}{g_i})
\end{equation}
or equivalently in the form of Jordan's fundamental equation $(\ref{nn})$,
\begin{equation}\label{eq3}
\frac{1}{|G|}=\frac{1}{n_1} +...+ \frac{1}{n_{r+s}}-\frac{b}{a},
 \end{equation}
 where $\frac{b}{a}=r+\frac{s}{2}-1$, and $n_i=g_i$ for $i\le r$, $n_i=2g_i$ for $i>r$.

\bigskip

It remains to discuss equation $(\ref{eq3})$ according to the possible values of the $g_i$'s. Since $g_i \ge 2$, we get from $(\ref{eq3bis})$ that $1 > \frac{r}{2} + \frac{s}{4}$, from which it follows immediately that $r \le 1$ and $2r+s \le 3$, so $\frac{b}{a}\in \{-\frac{1}{2},0, \frac{1}{2}\}$. Since $n_i$ divides $|G|$, $(\ref{eq3})$ forces $\frac{b}{a}> 0$ (and hence $\frac{b}{a}=\frac{1}{2}$),  unless $\frac{b}{a}=0$ and $r+s=1$. This last case can only occur if $r+\frac{s}{2}=1$, forcing $r=1,s=0$. We now examine the various possibilities.

\smallskip
\begin{itemize}
\item $\frac{b}{a}=0$ and $r=1$, $s=0$, then $g_1=|G|$ and $G$ is a \emph{cyclic group} of rotations around a single axis.
\end{itemize}







\smallskip
\begin{itemize}
\item $\frac{b}{a}=\frac{1}{2}$, and $r=1$, $s=1$, $$\frac{1}{2}+\frac{1}{|G|}=\frac{1}{n_1} + \frac{1}{n_2}$$

We may assume $n_1 \le n_2$. This forces $n_1<4$ and hence $n_1=2,3$. There are thus two cases:

1) if $n_1=2$, then $n_2=2g_2=|G|$ and $G$ is a \emph{dihedral group} of order $2n$, with $n=g_2$ an \emph{odd integer}. $G$ is the group of orientation preserving isometries of a regular $n$-gon. Moreover in our case $n$ is \emph{odd} because there are only two conjugacy classes of axes.

2) if $n_1=3$, then one checks that $n_2=4$ and $|G|=12$. Here $G$ is the group of orientation preserving isometries of a \emph{regular tetrahedron}.
\end{itemize}

\smallskip
\begin{itemize}
\item $\frac{b}{a}=\frac{1}{2}$, $r=0$, $s=3$, $$\frac{1}{2}+\frac{1}{|G|}=\frac{1}{n_1} +\frac{1}{n_2}+ \frac{1}{n_3}$$

We may assume $n_1 \le n_2 \le n_3$. This forces $n_1<6$. Since $r=0$ all three $n_i$'s  are even $n_i=2g_i$, and thus either $n_1=2$ or $n_1=4$. The case $n_1=2$ is excluded as above. So $n_1=4$ and  thus $$\frac{1}{4}+\frac{1}{|G|}=\frac{1}{2g_2}+ \frac{1}{2g_3}.$$ We have the following cases:

1) $g_2=2$, then $|G|=n_3=2g_3$ and $G$ is a \emph{dihedral group} of order $2n$, with $n=g_3$ an \emph{even integer}. $G$ is the group of orientation preserving isometries of a regular $n$-gon. Moreover in our case $n$ is \emph{even} because there are exactly three conjugacy classes of axes. For example if $n=2$, $G \simeq (\Z/2\Z)^2$ and every non trivial element is a flip around one of three mutually orthogonal axes.

2) $g_2=3$, then $$\frac{1}{6}+ \frac{2}{|G|}=\frac{1}{g_3}.$$ This forces $g_3<6$, and hence three cases:

a) $g_3=3$, then $|G|=12$ and $G$ preserves a regular tetrahedron. This however is in contradiction with the assumption $s=3$, since there are only two conjugacy classes of axes in this case. So this case cannot occur.

b) $g_3=4$, then $|G|=24$ and $G \simeq S_4$ is the group of orientation preserving isometries of a \emph{cube} or regular \emph{octahedron}.

c) $g_3=5$, $G \simeq A_5$ is the group of orientation preserving isometries of a regular \emph{icosahedron} or \emph{dodecahedron}.

\end{itemize}

\section{Non-standard analysis and Jordan's unlimited numbers}\label{sec6}

As we have seen, Jordan gave no explicit bound on $J(n)$ in his article. Of course, this is not due to any fundamental ineffectiveness in the proof. Indeed, if one very carefully follows Jordan's argument, then it is possible to obtain this way a tower of exponentials type of bound, i.e. a tower $$10^{10^{\ldots ^{10}}}$$ of length $n$, see \cite{basset}. In fact Jordan himself seems to have been dissatisfied with his original exposition and devoted a second article \cite{jordan-napoli} where he rewrote his proof and explained why it is effective (even though he still did not supply a concrete bound). From a purely epistemological viewpoint, it is however interesting to consider how Jordan gets away with not writing down any bound whatsoever in his original memoir. In fact, in order to convince the reader of the soundness of his argument, he introduces a distinction between two kinds of numbers, that he calls limited and unlimited. Let us quote him \cite[p. 114]{jordan}:

\smallskip
``It is important for the study thereafter to make precise the meaning we attach to the words \emph{limited} and \emph{unlimited}. They are not synonymous to \emph{finite} and \emph{infinite}. We will say that a number is limited if it is smaller than a certain bound that has been determined. It follows from this definition that a finite number, about which we have no data, is unlimited; but it becomes limited as soon as we manage to assign a bound to it.''
\smallskip

This way, instead of saying that a certain quantity is bounded in terms of $n$ only, he says that the quantity is limited. While if it is not, it is unlimited and this is somehow leading to a contradiction when considering the class equation $(\ref{nn})$. A century and a half after Jordan, it is hard not to see there the premise of a way of thinking that prefigures non-standard analysis, where a new kind of numbers, the unlimited ones, is given an existence of its own.

In fact, it is possible to give a non-standard treatment of Jordan's proof, which we now sketch. Starting with a sequence of possible counter-examples to the theorem, one may take their ultraproduct, which becomes a certain infinite, pseudofinite, subgroup $\widehat{G}$ of $GL_n(\widehat{K})$. Here $\widehat{K}$ is the ultraproduct of algebraically closed fields $K_i$. A fan in $\widehat{G}$ is defined to be an internal diagonalizable subgroup $F$ such that each root $\alpha_{ij}: F \to \widehat{K}$, $f \mapsto \lambda_i(f)/\lambda_j(f)$ is either infinite or trivial. The induction hypothesis on the dimension allows one to assume that the centralizer $C(g)$ of every non-scalar element $g \in \widehat{G}$ admits a unique normal maximal fan $F_g$ whose index is finite.  Arguing as in Jordan's proof, we can partition the elements of $\widehat{G}$ into four parts: scalars $\Phi$, elements $g \notin \Phi$ with $F_g =\Phi$, elements $g \notin \Phi$ with $\Phi \lneq F_g$ and $F_g$ not maximal in $\widehat{G}$, elements $g \notin \Phi$ with $F_g$ maximal. Exploiting the fact that $\widehat{G}$ is pseudofinite, the second and third parts form a proportion of $r|\widehat{G}|$ of $\widehat{G}$, while the last part forms a proportion $(r'-\sum q_i^{-1})|\widehat{G}|$, where $r,r'$ are finite (standard) rationals and $q_i$ ($q_i=|N_{\widehat{G}}(F_{g_i})/\Phi|$) are a finite number of (non-standard) divisors of $|\widehat{G}/\Phi|$. Denoting by $g$ the non-standard integer $|\widehat{G}/\Phi|$, we thus have the equation:
$$\frac{1}{g}=r'' + \sum_i \frac{1}{q_i}$$
where $r''=1-(r+r')$. Taking the difference with standard parts we see that this implies:
$$\frac{1}{g}=\sum_i^{}{}^{'} \frac{1}{q_i}$$
for a subsum of the original sum. However the $q_i$'s are divisors of $g$ and the only way this can happen is if $q_i=g$ for some $i$. By $(\ref{borne})$ and $(\ref{normali})$, this means that $F_{g_i}$ is normal in $\widehat{G}$ and of finite index. This contradiction ends the proof.

\bigskip

The reader curious to take a look at Jordan's original article will see that the above non-standard treatment is in fact much closer to Jordan's own formulation of his proof than the exposition we have given of it in Section \ref{sec3}. Indeed Jordan does not talk about $M$-fans, but only defines fans. And he does so exactly as we did in the non-standard treatment above only using the word ``illimited'' in place of the word ``infinite''. Of course this definition can only make sense rigorously  if we place ourselves in a non-standard universe to begin with. So his proof is resolutely non-standard since its very inception.  His original formulation then reads \cite[\S 40 p. 114]{jordan}

\begin{theorem}(Jordan's theorem, original formulation) A finite subgroup $G$ of linear substitutions admits a unique maximal fan. It is normal and its index is a limited number.
\end{theorem}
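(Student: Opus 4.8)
The plan is to recast the entire argument of Sections \ref{sec3} and \ref{sec4} in the non-standard universe and observe that it goes through essentially verbatim, with "limited/unlimited" replacing the role played by the explicit bound $N(n-1)$. First I would set up the ultraproduct framework: given a putative sequence of counterexamples $G_i \leq \GL_n(K_i)$ with each $G_i$ made of diagonalizable elements, form $\widehat{G} = \prod G_i / \mathcal{U}$, a pseudofinite group internal to the ultrapower, sitting inside $\GL_n(\widehat{K})$ for $\widehat{K} = \prod K_i / \mathcal{U}$ algebraically closed. The crucial definitional move is that a \emph{fan} in this setting is an internal diagonalizable subgroup $F$ such that each root $\alpha_{ij} \colon f \mapsto \lambda_i(f)/\lambda_j(f)$ is either trivial or takes an infinite (unlimited) number of values — this is the honest translation of Jordan's original "illimité" and it dispenses entirely with the parameter $M$. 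The induction hypothesis on $n$ (applied to centralizers of non-scalar elements, which preserve a proper eigenspace decomposition exactly as in Lemma \ref{max}) supplies, for each non-scalar $g$, a unique maximal normal fan $F_g$ of \emph{finite} index in $C(g)$.

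Next I would reproduce the four-part partition of $\widehat{G}$ — scalars $\Phi$; non-scalar $g$ with $F_g = \Phi$; non-scalar $g$ with $\Phi \lneqq F_g$ not maximal; non-scalar $g$ with $F_g$ maximal in $\widehat{G}$ — and run the counting arguments of items 1), 2), 3) of Section \ref{sec3} using the pseudofinite counting measure instead of ordinary cardinality. The three lemmas \ref{max0}, \ref{enef}, \ref{maxfan} translate directly: wherever the standard proof invokes the bound $N$ on $[C_G(F):\mathcal F]$ or the bound $n!$ on $[N_G(F):C_G(F)]$, one now simply notes that these indices are finite (the first by induction hypothesis, the second because an index-$>n!$ subgroup would move the weight spaces). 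The outcome is that the second and third parts together have measure $r$ for some standard rational $r$, while the fourth part has measure $r' - \sum_i q_i^{-1}$ with $r'$ a standard rational and $q_i = |N_{\widehat{G}}(F_{g_i})/\Phi|$ a finite list of (possibly non-standard) divisors of $g := |\widehat{G}/\Phi|$. Collecting terms yields $\frac{1}{g} = r'' + \sum_i \frac{1}{q_i}$ with $r'' = 1-(r+r')$ standard.

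Then comes the punchline, which replaces Lemma \ref{fraction}: take standard parts. Since $\frac1g$ is infinitesimal and the $q_i$ are either finite or infinite, the standard-part identity forces $r'' + \sum_{q_i \text{ finite}} q_i^{-1} = 0$; but this is a sum of a standard rational and finitely many positive terms equalling zero, and a short rationality argument (the standard part of each $q_i^{-1}$ is rational) pins down which $q_i$ are infinite, leaving $\frac{1}{g} = {\sum_i}' \frac{1}{q_i}$ over the infinite $q_i$. Now each such $q_i$ divides $g$, so $g/q_i$ is an integer; writing the equation as $1 = {\sum_i}' g/q_i$ and noting each summand is a positive integer, exactly one term survives and it equals $1$, i.e. $q_i = g$ for some $i$. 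Unwinding, $N_{\widehat{G}}(F_{g_i}) = \widehat{G}$, so $F := F_{g_i}$ is normal, and by the finite-index bounds $[\widehat{G}:F] = [\widehat{G}:N_{\widehat{G}}(F)]\cdot[N_{\widehat{G}}(F):C(F)]\cdot[C(F):F]$ is finite — contradicting that $\widehat{G}$ was built from genuine counterexamples. Finally I would note that a maximal fan, once it exists, is unique (hence normal) by the translation of Lemma \ref{max0}, and transfer the whole conclusion back down: the non-existence of a counterexample sequence is exactly the statement that $[G:\mathcal F]$ is a limited number.

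\textbf{Main obstacle.} The delicate point is the passage from $\frac1g = r'' + \sum_i \frac1{q_i}$ to the pure subsum $\frac1g = {\sum_i}'\frac1{q_i}$: one must be careful that "taking standard parts" is legitimate — the $q_i^{-1}$ that are not infinitesimal have rational standard part (being reciprocals of genuine integers bounded by the number of conjugacy classes, which is itself finite), so the standard-part equation is an honest identity among rationals and can be subtracted off cleanly. Establishing that the number of distinct conjugacy classes contributing to each sum is finite — the analogue of the bound $k_1+k_2+k_3 \le 2n!N$ — is what makes this work, and it follows because each class contributes a measure bounded below by $1/(2n!\cdot(\text{finite index}))$, hence finitely many classes. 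Getting this finiteness bookkeeping right in the internal/external language is the one place where care is genuinely needed; everything else is a faithful transcription of Jordan's counting.
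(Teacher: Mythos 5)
Your proposal is correct and takes essentially the same route as the paper's own treatment: the paper proves this statement precisely by the ultraproduct/pseudofinite argument sketched in Section \ref{sec6} (fans as internal diagonalizable subgroups with trivial or infinite roots, the four-part partition, the equation $\frac{1}{g}=r''+\sum_i\frac{1}{q_i}$, and taking standard parts to force $q_i=g$ for some $i$). Your added care about why the standard-part step is legitimate and why only finitely many conjugacy classes contribute fills in exactly the points the paper leaves implicit.
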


\bigskip

To finish, we stress the key role of the finiteness of $G$ in Jordan's theorem. In Jordan's proof it is exploited arithmetically via the class equation. This is to be contrasted with Bieberbach's geometric argument via the commutator shrinking property, where finiteness is exploited via the element closest to the identity. 

\bigskip

We end this section by mentioning in passing some related recent developments around a question of Zilber \cite[Problem 6.3]{zilber} regarding pseudofinite groups. Recently, Nikolov, Schneider and Thom \cite{nikolov-schneider-thom} proved that  every homomorphism from a pseudofinite group to a compact Lie group has abelian-by-finite image, thus answering a conjecture of Pillay \cite[Conjecture 1.7]{pillay} and Zilber's question by the same token. 

Of course such a strong statement is more than enough to establish Jordan's theorem itself following the non-standard approach sketched above at least when the characteristic of $K$ is zero. Indeed, in this case $\widehat{K}$ can be taken to be isomorphic to $\C$ as any ultraproduct of countable algebraically closed fields of characteristic zero. Furthermore, we may assume that $\widehat{G}$ lies in the internal set of unitary matrices $\U_n(\widehat{K})$, in other words that $\widehat{G}$ is a subgroup of a compact Lie group. By Nikolov-Schneider-Thom, this implies that  $\widehat{G}$  is abelian-by-finite, which is the desired contradiction. 

Of course the theorem of Nikolov, Schneider and Thom lies much deeper than Jordan's theorem, because it applies to any pseudofinite group and not only those lying in some $\GL_n$ for some fixed $n$. Their proof relies on the deep results of Nikolov and Segal \cite{nikolov-segal} about commutator width in finite groups.

\section{Bounds on $J(n)$} \label{sec7} 

To conclude, we briefly survey the history around Jordan's theorem and how bounds on $J(n)$ have sharpened over time: 

\bigskip

Jordan (1878): no bound (in fact: tower of exponentials \cite{basset}).

Blichfeldt (1905): $\exp(O(n^3))$.

Bieberbach (1911): $(1+32^4n^{10})^{2n^2}$.

Frobenius (1911): $(\sqrt{8n}+1)^{2n^2}$.

Blichfeldt (1917):  $n!6^{(n-1)(\pi(n+1)+1)}$ [$\pi(x)$ is the number of prime numbers $\leq x$].

Weisfeiler (1984): $(n+1)! e^{O(\log n)^2}$ (using CFSG).

Collins (2007): $(n+1)!$ for $n\ge 71$ (using CFSG).

\bigskip

In \cite[I p. 396]{blichfeldt} Blichfeldt, who had just completed his dissertation under Sophus Lie, shows that no prime  $p\ge n(2n-1)$ divides the order of a finite primitive subgroup of $\GL_n(\C)$ and in \cite[II p. 321]{blichfeldt} he obtains bounds for the $p$-exponent of the order of a primitive subgroup, in particular this exponent is at most $1$ if $p>n$ (see \cite[\S 67]{blichfeldt-FCL}). These yield a bound of the order of $\exp(O(n^3))$ for arbitrary finite subgroups (implicit in \cite[III p. 232]{blichfeldt}).

In his 1917 monograph \cite{blichfeldt-FCL} Blichfeldt furthers his earlier results incorporating a geometric argument inspired by  Bieberbach's argument \cite{bieberbach} and Frobenius' improvement \cite{frobenius}. He shows in \cite[\S 73]{blichfeldt-FCL} that an abelian subgroup of a primitive group must have order at most $6^{n-1}$ times the size of the group of scalar matrices. This is based on a lemma \cite[\S 70]{blichfeldt-FCL} according to which in a finite primitive subgroup, any transformation whose eigenvalues are concentrated in an arc of length at most $\frac{2\pi}{3}$ centered at one of them on the unit circle, must be scalar. This lemma is closely related to the Bieberbach-Frobenius proofs. And finally he derives the bound $n!6^{(n-1)(\pi(n+1)+1)}$ on $J(n)$, where $\pi(n)$ is the number of primes $\leq n$. See \cite[ch. 30]{dornhoff} for a thorough treatment of Blichfeldt's bound and \cite{robinson,robinson2} for recent improvements on Blichfeldt's lemma. In fact Blichfeldt claimed that $6$ could be replaced by $5$, but no proof of this has appeared. The three-author book \cite{miller-blichfeldt-dickson}, which is dedicated to Camille Jordan, also contains a summary of these results.


Other excellent expositions of Blichfeldt's bound are contained in \cite{speiser} and \cite{isaacs}. See also \cite[ch. 5]{dixon} for a treatment of Blichfeldt's earlier results and a proof of Jordan's theorem using the Bieberbach-Frobenius argument.

Brauer \cite{brauer} conjectured that Blichfeldt's bound could be improved to one of the form $e^{O(n\log n)}$ and indeed he was able to achieve it under certain hypotheses. In fact, for finite solvable subgroups Dornhoff proved an exponential bound $2^{4n/3}3^{10n/9-1/3}$  that is even sharp for infinitely many $n$'s \cite[Th. 36.4]{dornhoff}.

Nevertheless, Blichfeldt's second bound of the form $e^{O(n^2/\log n)}$ seems to be the best one available without the classification of finite simple groups (CFSG). This small looking gain of a factor $(\log n)^2$ in the exponent  compared to the Bieberbach-Frobenius bound can sometimes prove important, as we have found out in \cite{breuillard-pisier}.

Shortly before disappearing while hiking\footnote{See \url{https://en.wikipedia.org/wiki/Boris_Weisfeiler}} in Chile, B. Weisfeiler announced a bound on $J(n)$ of $e^{O(n\log n)}$ quality \cite{weisfeiler}. His unpublished manuscript has now been typed-up and is available online \cite{weisfeiler-online}. Finally, more recently, M. Collins \cite{collins, collins2} has improved the bound  for $n$ large to one that is sharp, namely $(n+1)!$, thus closing a long chapter in the history of finite linear groups.

\setcounter{tocdepth}{1}

\end{document}